\documentclass[a4paper,12pt]{amsart}
\usepackage{amssymb}
\usepackage{bbm}
\usepackage[dvips]{graphicx}
\usepackage{pdfsync}
\usepackage{color,xcolor}
\usepackage{verbatim}
\usepackage{enumitem}
\usepackage[margin=2cm]{geometry}

\newtheorem{thm}{Theorem}[section]
\newtheorem*{thm*}{Theorem}

\newtheorem*{lem*}{Lemma}

\newtheorem*{fct*}{Fact}

\newtheorem*{cor*}{Corollary}

\newtheorem*{prop*}{Proposition}
\theoremstyle{definition}
\newtheorem{defn}[thm]{Definition}
\newtheorem*{defn*}{Definition}
\theoremstyle{remark}
\newtheorem{rem}[thm]{Remark}
\newtheorem*{rem*}{Remark}

\newtheorem*{example*}{Example}

\newtheorem*{que*}{Question}


\newcommand{\abs}[1]{\left\vert#1\right\vert}
\newcommand{\set}[1]{\left\{#1\right\}}
\newcommand{\eps}{\varepsilon}

\newcommand{\CB}{\mathcal{B}}

\newcommand{\CN}{\mathcal{N}}
\newcommand{\CT}{\mathcal{T}}

\newcommand{\CS}{\mathcal{S}}

\renewcommand{\emptyset}{\varnothing}

\newcommand{\sd}{\bigtriangleup}

\usepackage{mathrsfs}

\renewcommand{\epsilon}{\varepsilon}

\renewcommand{\leq}{\leqslant}
\renewcommand{\geq}{\geqslant}

\title[Factoring group shift actions]{Factoring strongly irreducible group shift actions onto full shifts of lower entropy}

\author{Dawid Huczek, Sebastian Kopacz}

\address{\hskip- \parindent
Dawid Huczek, Faculty of Pure and Applied Mathematics, Wroclaw University of Science and Technology, Wybrze\.ze Wyspia\'nskiego 27, 50-370 Wroc\l aw, Poland}
\email{dawid.huczek@pwr.edu.pl}

\address{\hskip- \parindent
	Sebastian Kopacz, \emph{Faculty of Pure and Applied Mathematics, Wroclaw University of Science and Technology, Wybrze\.ze Wyspia\'nskiego 27, 50-370 Wroc\l aw, Poland}}
\email{sebastian.kopacz@pwr.edu.pl}

\subjclass[2010]{Primary 37B10; Secondary 37B40, 43A07}    

\keywords{countable amenable group, (dynamical) tiling, free action, topological entropy, comparison property}

\date{\today}


\begin{document}

\begin{abstract}
We show that if $G$ is a a countable amenable group with the comparison property, and $X$ is a strongly irreducible $G$-shift satisfying certain aperiodicity conditions, then $X$ factors onto the full $G$-shift over $N$ symbols, so long as the logarithm of $N$ is less than the topological entropy of $G$.
\end{abstract}
\maketitle

\section{Introduction}
A well-known result in the study of symbolic dynamical systems states that any 
subshift of finite type (SFT) with the action of $\mathbb{Z}$ and entropy 
greater or equal than $\log N$ factors onto the full shift over $N$ symbols -- 
this was proven in \cite{M} and \cite{B} for the cases of equal and unequal 
entropy respectively. Extending these results for actions of other groups has 
been difficult, and it is known that a factor map onto a full shift of equal 
entropy may not exist in this case (see \cite{BS}). Johnson and Madden showed 
in \cite{JM} that any SFT with the action of $\mathbb{Z}^d$, which has entropy greater 
than $\log N$ and satisfies an additional mixing condition (known as corner gluing), 
has an extension which is finite-to-one (hence of equal entropy) 
and maps onto the full shift over $N$ symbols. This result was later improved 
by Desai in \cite{D} to show that such a system factors directly onto the full 
shift, without the intermediate extension, and then by Boyle, Pavlov and Schraudner (\cite{BPS}) to replace the corner gluing by a weaker mixing condition (block gluing).

In this paper we use similar methods to adapt these constructions to symbolic dynamical systems with actions of amenable groups. Our approach requires three assumptions: that the group $G$ has the comparison property (satisfied, for instance, for all countable amenable groups with subexponential growth), that the system $X$ is strongly irreducible (which replaces the corner gluing condition, and allows the construction to be valid without assuming that the underlying system be a subshift of finite type), and that it has non-periodic blocks for all possible sets of periods (this condition seems reminiscent of faithfulness but we were not able to directly derive it from such an assumption). With these assumptions we prove that $X$ can be factored onto any full shift of smaller entropy (i.e. a full shift over $N$ symbols, where $\log N < h(X)$). We note that our method does not apply for the case of equal entropy ($\log N = h(X)$); as we mention earlier, Boyle and Schraudner have shown in \cite{BS} that there exist $\mathbb{Z}^d$-shifts of finite type which do not factor onto full shifts of equal entropy.

\section{Preliminaries}
In this section we establish the definitions, notation and standard facts we will use in this paper. Since this is mainly standard material, we omit most proofs and references. 
\subsection{Amenability, F\o lner sets and invariance}
Throughout this paper, $G$ we will denote a countable amenable group, and $(F_n)$ will denote a fixed F\o lner sequence, i.e. a sequence of finite subsets of $G$ such that for every $g\in G$ the sequence $\frac{\abs {gF_n\sd F_n}}{\abs{F_n}}$ tends to $0$ as $n$ goes to infinity. Multiplication involving sets will always be understood element-wise, so $gF_n$ is the set $\set{gf: f\in F_n}$, and $KF$ in the following definition denotes the set $\set{kf: k\in K, f\in F}$.
\begin{defn}
Let $K$, $F$ be finite subsets of $G$ and let $\eps>0$. We say that $F$ is $(K,\eps)$-invariant if $\abs{KF\sd F}<\eps\abs{F}$.
\end{defn}
The defining property of the F\o lner sequence can be equivalently (and usefully) restated thus: for every finite $K\subset G$ and every $\eps>0$ there exists an $N$ such that for every $n\geq N$ the set $F_n$ is $(K,\eps)$ invariant.
\begin{defn}
Let $D$, $F$ be finite subsets of $G$. Let $F_D=\set{g\in F: Dg\subset F}$. We will refer to $F_D$ as the $D$-interior of $F$.
\end{defn}
A straightforward computation shows that for any $\eps>0$, if $F$ is $(D,\frac{\eps}{\abs{D}})$-invariant, then $\abs{F_D}\geq (1-\eps)\abs{F}$ (we will describe such a relation in cardinalities by saying that $F_D$ is a \emph{$(1-\eps)$-subset of $F$}). Also observe that $F_{DK}=(F_D)_K$.
\subsection{Symbolic dynamical systems}
Let $\Lambda$ be a finite set (referred to as the \emph{alphabet}). The \emph{full $G$-shift} over $\Lambda$ is the product set $\Lambda^G$ with the product topology (induced by the discrete topology on $\Lambda$) endowed with the right-shift action of $G$:
\[ (gx)(h)=x(hg).\]
By a symbolic dynamical system, a shift space, or subshift, we understand any closed, shift-invariant subset $X$ of $\Lambda^G$.
\begin{defn}
For a finite $T\subset G$, by a block with domain $T$ we understand a function $B:T\to\Lambda$. If $X$ is a symbolic dynamical system over $\Lambda$, and $x\in X$, then we say that $B$ occurs in $x$ (at position $g$) if for every $t\in T$ we have $x(tg)=B(t)$, which we denote more concisely as $x(Tg)=B$. We say that $B$ occurs in $X$ if it occurs in some $x\in G$. 
\end{defn}
\begin{defn}
We say that a symbolic dynamical system $X$ is strongly irreducible (with irreducibility distance $D$) if for any blocks $B_1,B_2$ (with domains $T_1,T_2$) which occur in $X$, and any $g_1,g_2\in G$ such that $DT_1g_1\cap T_2g_2=\emptyset$, there exists an $x \in X$ such that $x(T_1g_1)=B_1$ and $x(T_2g_2)=B_2$.
\end{defn}
\begin{defn}
If $P$ is a finite subset of $G$, we say that a block $B$ with domain $T$ is $P$-aperiodic if for every $p\in P$ there exists a $t\in T$ such that $tp$ is also in $T$, and $B(tp)\neq B(t)$. We will say that a symbolic dynamical system $X$ is aperiodic if it has a $P-$aperiodic block for every finite $P\subset G$.
\end{defn}
\begin{defn}
For a fixed F\o lner sequence $(F_n)$, let $\CN_{F_n}(X)$ be the number of different blocks with domain $F_n$ which occur in $X$. The topological entropy of a symbolic dynamical system $X$ is defined as 
\[ h(X)=\lim_{n\to\infty}\frac{1}{\abs{F_n}}\log \CN_{F_n}(X).\]
\end{defn}
(In this paper we use logarithms with base 2, although as usual the theorems remain true if one defines entropy using any other base, so long as the choice remains consistent throughout). It is a standard fact that the obtained value of $h(X)$ does not depend on the choice of the F\o lner sequence. In fact, the relation between entropy and the number of blocks holds for any sufficiently invariant domain, as per the following theorem (\cite{LW})
\begin{thm}\label{thm:entropy_and_block_count}
    For any $\eps>0$ there exists an $N>0$ and $\delta>0$ such that if $T$ is an 
    $(F_n,\delta)$-invariant set for some $n>N$, then 
    $\CN_T(X)>2^{(h(X)-\eps)\abs{T}}$.
\end{thm}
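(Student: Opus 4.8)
The plan is to recognize the block-counting function as a subadditive, monotone, shift-invariant function on finite subsets of $G$ and to apply the Ornstein--Weiss lemma. Write $\phi(F)=\log\CN_F(X)$. Since a block with domain $F\cup F'$ is determined by its restrictions to $F$ and to $F'$, the function $\phi$ is subadditive, $\phi(F\cup F')\leq\phi(F)+\phi(F')$; and since every $F$-block occurring in $X$ is the restriction of some $F'$-block occurring in $X$ whenever $F\subseteq F'$, it is monotone, $\phi(F)\leq\phi(F')$. It is also shift-invariant, $\phi(Fg)=\phi(F)$, because the shift action carries occurrences of a block to occurrences of a translated block. By definition $h(X)=\lim_n\phi(F_n)/\abs{F_n}$, so $\phi$ is precisely of the type to which the Ornstein--Weiss convergence lemma (\cite{LW}) applies.

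First I would invoke that lemma in the form asserting that $\phi(F)/\abs{F}$ converges to $h(X)$ uniformly over sufficiently invariant $F$: for every $\eps>0$ there is a finite $K\subset G$, which we may take to contain the identity, and a $\delta_0>0$, such that every $(K,\delta_0)$-invariant set $F$ satisfies $\abs{\phi(F)/\abs{F}-h(X)}<\eps$. Only the lower half, $\phi(F)/\abs{F}>h(X)-\eps$, is needed here, and it is the delicate half. It does not follow from packing $F$ by good \Folner{} shapes, because bounding $\phi$ of a disjoint union from below would require superadditivity -- the freedom to prescribe blocks independently on the pieces -- which is unavailable without a mixing hypothesis. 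Instead one compares $F$ with a large, highly invariant \Folner{} set $F_M$, whose ratio $\phi(F_M)/\abs{F_M}$ already lies within $\eps/2$ of $h(X)$, by quasi-tiling and then using subadditivity to push $F_M$'s near-optimal ratio down to $F$. This quasi-tiling step is the technical heart of the argument and is exactly what the Ornstein--Weiss machinery provides, so I would cite it rather than reproduce it.

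It then remains to convert the abstract invariance hypothesis into the stated one. Assuming the \Folner{} sequence exhausts $G$ and contains the identity, choose $N$ so large that $K\subseteq F_n$ for all $n>N$, and put $\delta=\delta_0$. If $T$ is $(F_n,\delta)$-invariant for some $n>N$ then, since $e\in K\subseteq F_n$ gives $T\subseteq KT\subseteq F_nT$ and hence $KT\sd T\subseteq F_nT\sd T$, we obtain $\abs{KT\sd T}\leq\abs{F_nT\sd T}<\delta\abs{T}=\delta_0\abs{T}$, so $T$ is $(K,\delta_0)$-invariant. The lower estimate above then yields $\phi(T)/\abs{T}>h(X)-\eps$, that is $\CN_T(X)>2^{(h(X)-\eps)\abs{T}}$, as required. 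The main obstacle, as noted, is the lower bound on $\phi$ for invariant sets, whose proof rests on the quasi-tiling theorem for amenable groups.
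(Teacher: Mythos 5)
The paper itself gives no proof of this theorem: it is stated as a known result with a citation to \cite{LW}, and your reduction to the Ornstein--Weiss lemma for monotone, subadditive, shift-invariant set functions (applied to $\phi(F)=\log\CN_F(X)$, with the uniform-over-invariant-sets form obtained by the standard contradiction argument) is exactly the intended reading of that citation; the properties you verify for $\phi$ are correct, as is your remark that the lower bound is the delicate half where the quasitiling machinery lives. However, your final conversion step contains a genuine gap: you assume the \Folner{} sequence exhausts $G$ and contains the identity, so that $K\subseteq F_n$ for all large $n$. The paper makes no such assumption -- its \Folner{} sequence is an arbitrary fixed sequence with $\abs{gF_n\sd F_n}/\abs{F_n}\to 0$, and such sequences need not be nested, need not contain $e$, and need not exhaust $G$ (in $\Z$, take $F_n=\set{n^2,n^2+1,\ldots,n^2+n}$; then no $F_n$ contains any fixed $K\ni 0$). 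Since the theorem quantifies over sets that are $(F_n,\delta)$-invariant for the \emph{given} sequence, passing to an enlarged, exhausting \Folner{} sequence changes the hypothesis class and does not recover the stated result; as written, your proof establishes only a weaker statement.

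The gap is repairable without your extra assumption, in two steps. First, $(F_n,\delta)$-invariance of $T$ already yields near-invariance under every element of $F_nF_n^{-1}$: for each single $f\in F_n$ we have $\abs{fT\setminus T}\leq\abs{F_nT\sd T}<\delta\abs{T}$, hence $\abs{fT\sd T}=2\abs{fT\setminus T}<2\delta\abs{T}$, and therefore for $g=f_1f_2^{-1}$ with $f_1,f_2\in F_n$,
\[
\abs{gT\sd T}\leq\abs{f_1f_2^{-1}T\sd f_1T}+\abs{f_1T\sd T}=\abs{f_2^{-1}T\sd T}+\abs{f_1T\sd T}<4\delta\abs{T}.
\]
Second, the \Folner{} property itself guarantees that any fixed finite $K$ satisfies $K\subseteq F_nF_n^{-1}$ for all large $n$: for $g\in K$ and $n$ large we have $\abs{gF_n\sd F_n}<\abs{F_n}$, which forces $gF_n\cap F_n\neq\emptyset$, i.e.\ $g\in F_nF_n^{-1}$, and $K$ is finite. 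Now take $K$ (with $e\in K$) and $\delta_0$ from the uniform Ornstein--Weiss statement, choose $N$ so that $K\subseteq F_nF_n^{-1}$ for all $n>N$, and set $\delta=\delta_0/(4\abs{K})$. If $T$ is $(F_n,\delta)$-invariant for some $n>N$, then using $e\in K$ we get $\abs{KT\sd T}=\abs{KT\setminus T}\leq\sum_{g\in K}\abs{gT\setminus T}<4\abs{K}\delta\abs{T}=\delta_0\abs{T}$, so $T$ is $(K,\delta_0)$-invariant and the rest of your argument goes through for an arbitrary \Folner{} sequence.
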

\subsection{Quasitilings and tilings}
\begin{defn}
A \emph{quasitiling} of a countable amenable group $G$ is any collection $\CT$ of finite subsets of $G$ (referred to as \emph{tiles}) such that there exists a finite collection $\CS$ of finite subsets of $G$ (referred to as \emph{shapes}) such that every $T\in \CT$ has a unique representation $T=Sc$ for some $S\in\CS$ and some $c\in G$. We refer to such a $c$ as the \emph{center} of $T$. If the tiles of $\CT$ are disjoint and their union is all of $G$, then we refer to $\CT$ as a \emph{tiling}.
\end{defn}
\begin{rem}
If we enumerate the set of shapes of a quasitiling $\CT$ as $\set{S_1,S_2,\ldots,S_r}$, then we can identify $\CT$ with an element $x_{\CT}$ of the set $\set{0,1,\ldots,r}^G$ defined as $x_{\CT}(g)=j$ if $S_jg$ is a shape of $\CT$, and $x_{\CT}(g)=0$ otherwise. This in turn induces (via orbit closure) a subshift $X_{\CT}$, and any element of $X_{\CT}$ in turn corresponds to a quasitiling of $G$ which has the same disjointness, invariance and density properties as $\CT$. This allows us to discuss some properties of quasitilings using the notions of topological dynamics (in particular, it makes sense to talk of entropy and of factorizations), by interpreting these notions as applied to the corresponding subshifts.
\end{rem}
We will use several theorems which guarantee the existence of quasitilings and/or tilings satisfying certain properties. The first is proven in \cite{DHZ} and we will invoke it when constructing subsystems with specified entropy (in section \ref{sec:subsystem}) and marker blocks (in section \ref{sec:markers}).
\begin{thm}
For any $K\subset G$ and any $\eps>0$ there exists a tiling $\CT$ of $G$ such that all tiles of $\CT$ are $(K,\eps)$-invariant and $h(\CT)=0$.
\end{thm}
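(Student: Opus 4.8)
The plan is to obtain $\CT$ not in isolation but as the finest member of a nested (\emph{congruent}) sequence of tilings, and to read off zero entropy from the fact that every coarser tiling in the sequence has strictly larger tiles and hence smaller entropy. Throughout, the only hard input is the Ornstein--Weiss quasitiling machinery, which for any prescribed invariance $(K,\eps)$ supplies a finite family of $(K,\eps)$-invariant shapes together with disjoint translates covering all but an $\eps$-fraction of $G$.

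First I would upgrade these quasitilings to exact, mutually congruent tilings. Fixing a rapidly increasing sequence of invariance parameters, build tilings $\CT_1,\CT_2,\ldots$ recursively so that (i) every tile of $\CT_k$ is $(K,\eps)$-invariant, (ii) the tiles of $\CT_k$ have size at least $m_k$, with $m_k\to\infty$, (iii) $\CT_k$ uses at most $r_k$ shapes with $\frac{\log r_k}{m_k}\to 0$, and crucially (iv) each tile of $\CT_{k+1}$ is exactly a union of tiles of $\CT_k$. Exactness --- passing from the $\eps$-quasitilings produced by Ornstein--Weiss to genuine tilings with no gaps and no overlaps --- is achieved by iteratively filling the uncovered $\eps$-fraction at successively finer scales and extracting a limit in $\set{0,1,\ldots,r}^G$ by compactness, all while respecting the nesting requirement (iv). I then set $\CT=\CT_1$; by (i) all its tiles are $(K,\eps)$-invariant.

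For the entropy bound I would use (iv) to observe that the subdivision of each shape of $\CT_{k+1}$ into shapes of $\CT_k$ is governed by a fixed, deterministic rule, so the map sending a tiling in $X_{\CT_{k+1}}$ to the induced finer tiling is a continuous, shift-equivariant surjection onto $X_{\CT_1}$. Consequently $X_{\CT}=X_{\CT_1}$ is a topological factor of $X_{\CT_k}$ for every $k$, whence $h(\CT)\leq h(\CT_k)$ for all $k$, and it suffices to bound $h(\CT_k)$. Here I would count directly: a tiling with tiles of size at least $m_k$ and at most $r_k$ shapes, restricted to a sufficiently invariant set $F_n$, has at most about $\frac{\abs{F_n}}{m_k}$ tiles meeting $F_n$, each described by one of $r_k$ labels and a position; the number of such patterns is at most the number of ways to mark a subset of $F_n$ of density $\tfrac{1}{m_k}$ times $r_k$ labels per mark. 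Taking logarithms, dividing by $\abs{F_n}$ and letting $n\to\infty$ yields $h(\CT_k)\leq \frac{\log r_k}{m_k}+\eta(1/m_k)$, where $\eta$ denotes the binary entropy function. By (ii)--(iii) the right-hand side tends to $0$, so $h(\CT)=0$.

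The main obstacle is the second step, the construction of the congruent tilings: the Ornstein--Weiss theorem yields only quasitilings, and closing every gap to obtain an exact tiling tends to force the use of ever smaller tiles, which is exactly what would spoil both the invariance requirement (i) and the entropy estimate. The resolution is to maintain the nesting (iv) together with a uniform lower bound $m_k$ on tile sizes throughout the recursion, so that the finer scales used to fill gaps never drop below the fixed $(K,\eps)$-invariance threshold and their combined contribution to the pattern count stays negligible. Managing this bookkeeping --- simultaneously guaranteeing exactness, invariance of every tile, congruence, and the growth conditions (ii)--(iii) --- is the technical heart of the argument, and is precisely the content of \cite{DHZ}.
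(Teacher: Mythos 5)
The first thing to note is that the paper does not prove this statement at all: it is imported verbatim from \cite{DHZ} (``The first is proven in \cite{DHZ}\dots''), so there is no internal proof to compare your attempt against; the only meaningful benchmark is the argument of \cite{DHZ} itself.

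Measured against that, your sketch reconstructs the right strategy, and its two ``soft'' ingredients are sound. The counting bound is correct: a tiling with at most $r_k$ shapes, all of cardinality at least $m_k$, is coded by its labelled set of centers, which has density at most $1/m_k$, giving $h(\CT_k)\leq \frac{\log r_k}{m_k}+\eta(1/m_k)$ up to boundary terms that vanish as $n\to\infty$. The mechanism for upgrading ``each $\CT_k$ has small entropy'' to ``$\CT_1$ has zero entropy'' is also the one actually used in \cite{DHZ}: in a congruent sequence in which each shape of $\CT_{k+1}$ carries a fixed subdivision into translates of shapes of $\CT_k$, the coarse tiling determines the fine one via a sliding block code, so $X_{\CT_1}$ is a factor of $X_{\CT_k}$ and $h(\CT_1)\leq\inf_k h(\CT_k)=0$. (A small bonus you could have noted: disjoint unions of $(K,\eps)$-invariant sets are again $(K,\eps)$-invariant, so congruence is automatically compatible with your requirement (i).) The weakness is that everything hard --- producing \emph{exact} tilings whose tiles are simultaneously large, $(K,\eps)$-invariant, finitely shaped with controlled shape count, and nested with a deterministic subdivision rule --- is openly deferred to \cite{DHZ}; your proposed ``resolution'' of the gap-filling obstacle restates the required properties rather than showing how to achieve them (the actual device in \cite{DHZ} exploits, among other things, the robustness of $(K,\eps)$-invariance under adjoining a small proportion of arbitrary points). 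As a self-contained proof your text is therefore incomplete, since the deferred step essentially \emph{is} the theorem; but since the paper under review disposes of the statement by the very same citation, your treatment is in effect equivalent to the paper's, with the added value of correctly outlining how the cited proof works.
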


When we construct the factor map onto the full shift, we will rely on combining two other results. The first one originally appears in the seminal paper by Ornstein and Weiss (\cite{OW}), although in \cite{DHZ} it is stated and proven in an equivalent form closer to the one stated here  (the main difference being that the original version does not explicitly use the notion of lower Banach density):
\begin{thm}\label{thm:quasitiling}
For any $\delta>0$ and $N>0$ there exists a quasitiling $\CT$ of $G$ such that:
\begin{enumerate}
    \item the shapes of $\CT$ are all F\o lner sets, i.e. $\CS=\set{F_{n_1},F_{n_2},\ldots,F_{n_r}}$, where $N\leq n_1<n_2<\ldots <n_r$, and $r$ depends only on $\delta$.
    \item $\CT$ is $\delta$-disjoint, i.e. every tile $T\in \CT$ has a subset $T^\circ$ such that $\abs{T{\circ}}>(1-\delta)\abs{T}$ and these subsets are pairwise disjoint.
    \item $\CT$ is $(1-\delta)$-covering, i.e the lower Banach density of the union of $\CT$ is greater than $1-\delta$.
\end{enumerate}
\end{thm}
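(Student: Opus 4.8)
The plan is to establish this theorem of Ornstein and Weiss by a greedy multi-scale covering construction: place translates of the largest (most invariant) shape first, then fill the residual gaps with successively smaller shapes, so that the uncovered region shrinks geometrically. Everything rests on one covering estimate, which I would isolate first: if $F$ is sufficiently invariant and $A\subseteq G$ is a set whose $F$-interior $A_F=\set{g\in A: Fg\subseteq A}$ carries almost all of the density of $A$, then any family $\set{Fc:c\in C}$ that is maximal among $\delta$-disjoint families whose cores $Fc\cap A$ are pairwise disjoint and of size $>(1-\delta)\abs F$ must cover at least a $\delta$-fraction of $A$, measured in lower Banach density $\underline d(\cdot)$.

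I would prove this estimate by an averaging argument that exploits the fact that maximality is a pointwise condition on $G$. For every admissible center $c$ (one with $\abs{Fc\setminus A}<\delta\abs F$) that cannot be added, the corresponding core must already meet the union $U$ of the chosen cores in at least $\delta\abs F$ points; summing $\sum_c\abs{Fc\cap U}$ over admissible $c$ and using that each point of $U$ lies in at most $\abs F$ translates $Fc$ yields $\abs F\,\abs{U\cap E}\geq(1-o(1))\,\delta\abs F\,\abs{A\cap E}$ for every large window $E$, hence $\underline d(U)\geq(1-o(1))\,\delta\,\underline d(A)$. Since the bound holds uniformly over all translates $E$, it is a genuine lower Banach density statement rather than an averaged one; the existence of a maximal family on the infinite group $G$ is guaranteed by Zorn's lemma.

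With this estimate available I would run the iteration. First fix $r$ with $(1-\delta)^r<\delta$, so that $r$ successive reductions of the uncovered density by the factor $(1-\delta)$ push it below $\delta$; here $r$ depends only on $\delta$, which is precisely assertion (1). Next, using the \Folner\ property together with the interior estimate recalled in the preliminaries (namely that $(K,\eps/\abs K)$-invariance of $F$ forces $\abs{F_K}\geq(1-\eps)\abs F$), choose $N\leq n_1<n_2<\cdots<n_r$ obeying a strict hierarchy: each $F_{n_j}$ is so much smaller, and each $F_{n_{j+1}}$ so much more invariant relative to $F_{n_j}$, that at every stage the current uncovered region has negligible boundary at the scale of the shape about to be placed. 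Placing shapes from $F_{n_r}$ down to $F_{n_1}$ and always defining the core of a new tile as its intersection with the current uncovered region, the covering estimate delivers assertion (3), while these cores are automatically pairwise disjoint across all scales and of relative size $>1-\delta$, which is assertion (2).

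The main obstacle is the simultaneous control of the boundary terms across all $r$ scales. The covering estimate at step $j$ is valid only because the uncovered region, being a union of complements of shapes of sizes $\abs{F_{n_{j+1}}},\ldots,\abs{F_{n_r}}$, is thick relative to $F_{n_j}$; I must therefore quantify the size and invariance gaps in the hierarchy $n_1\ll\cdots\ll n_r$ sharply enough that the accumulated error terms stay within the slack in $(1-\delta)^r<\delta$. A secondary technical point, easy to overlook, is that every estimate has to be stated uniformly over translates of $G$ in order to produce a statement about lower Banach density; this is exactly what forces the maximality condition to be imposed globally on $G$ rather than inside a single \Folner\ window.
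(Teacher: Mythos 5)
The paper does not actually prove Theorem \ref{thm:quasitiling}: it is imported verbatim from Ornstein--Weiss \cite{OW}, in the lower-Banach-density formulation proved in \cite{DHZ}, so the only ``paper proof'' is a citation. Your proposal reconstructs precisely the argument of those references --- the counting bound showing that a maximal $\delta$-disjoint family of translates of a sufficiently invariant shape covers a $\delta$-fraction of the remaining region in lower Banach density, iterated over $r$ scales of increasingly invariant \Folner\ shapes with $(1-\delta)^r<\delta$, the larger shapes chosen invariant relative to the smaller ones --- so it is essentially the same approach, and its key steps are sound.
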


The last theorem we will need is proven in \cite{DZ}, although the authors do not state it as a stand-alone fact, but rather establish it as a step in proving Theorem 7.5. The cited paper also contains a definition and a lengthy discussion of the comparison property; here we will just recall that the class of groups with comparison property includes all amenable groups of subexponential growth and it remains an open question whether there are \emph{any} countable amenable groups without the comparison property.
\begin{thm}
Suppose $G$ is a countable amenable group with the comparison property, and $K$ is a finite subset of $G$ containing the neutral element $e$. For every $\eps>0$ there exist $\delta>0$ and $N>0$ such that every $(1-\delta)$-covering quasitiling $\CT$ whose tiles are pairwise disjoint and shapes are $(K,\delta)$-invariant and have cardinality larger than $N$ can be modified to a tiling $\CT^\circ$ whose shapes are $(K,\eps)$-invariant. Moreover, the sets of centres of $\CT$ and $\CT^\circ$ are identical, and there exists a finite $H\subset G$ such that for any $g\in G$ we can determine the tile of $\CT^\circ$ to which it belongs, so long as we know the set $Hg\cap \CT$ (i.e., using the language of topological dynamics, $\CT^\circ$ is a factor of $\CT$).
\end{thm}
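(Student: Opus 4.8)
\textit{Proof proposal.}
The plan is to turn $\CT$ into a tiling by absorbing the uncovered cells into the existing tiles, and to arrange this absorption so that (i) it is governed by a finite window, which gives the factor property, and (ii) each tile grows only by a small fraction of its own size, so that invariance survives. Write $R=G\setminus\bigcup_{T\in\CT}T$ for the remainder. Since $\CT$ is $(1-\delta)$-covering with pairwise disjoint tiles, $R$ has upper Banach density at most $\delta$, while $\bigcup_{T\in\CT}T$ has lower Banach density at least $1-\delta$; for $\delta<\tfrac12$ this opens a density gap that the comparison property will convert into an absorption scheme.

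The key idea is to absorb $R$ not into the bulk of the tiles but into a \emph{thin} target. Fix a small auxiliary constant $\eps'$ (pinned down below in terms of $\eps$ and $\abs K$). For each of the finitely many shapes $S$ single out a subset $S_B\subseteq S$ with $\abs{S_B}\le\eps'\abs S$, and set $B=\bigcup_{Sc\in\CT}S_Bc$; this set is determined locally from $\CT$ and has lower Banach density close to $\eps'(1-\delta)$, which exceeds $\delta\ge\overline d(R)$ once $\delta$ is small. Passing to the subshift $X_\CT$ (after a free extension if needed) and letting $A,B_*$ be the clopen sets ``the origin lies in $R$'' and ``the origin lies in $B$'', the density gap gives $\mu(A)<\mu(B_*)$ for every invariant measure $\mu$, so the comparison property yields a subequivalence $A\preceq B_*$: a clopen partition $A=\bigsqcup_i A_i$ and elements $h_1,\dots,h_k$ with the $h_iA_i$ disjoint and inside $B_*$. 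Read back on $G$, this is a local, injective map $\phi\colon R\to B$ that moves each point by an element of the finite set $H_0=\set{h_1,\dots,h_k}$ and is determined by $\CT$ on a fixed finite window. I then define $\CT^\circ$ by attaching each $g\in R$ to the tile containing $\phi(g)$, leaving $\CT$ unchanged on $\bigcup_{T\in\CT}T$. Since $\phi$ maps into $\bigcup_{T\in\CT}T$ and every cell is either in a unique tile or in $R$, the collection $\CT^\circ$ partitions $G$, so it is a genuine tiling (with finitely many shapes, as the added part of each shape depends only on $\CT$ within a fixed window), and no centre is moved.

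Invariance of the enlarged shapes now follows from the thinness of the target. If $S^\circ=S\cup E$ is the new shape obtained from $S$, then $E$ is the $\phi$-preimage of the cells of $B$ inside this tile, so injectivity of $\phi$ gives $\abs E\le\abs{S_B}\le\eps'\abs S$. Because $e\in K$ we have $S^\circ\subseteq KS^\circ$, hence $\abs{KS^\circ\sd S^\circ}=\abs{KS^\circ\setminus S^\circ}\le\abs{KS\setminus S}+\abs{KE}\le\delta\abs S+\abs K\eps'\abs S$. Choosing $\eps'=\eps/(2\abs K)$ and then $\delta<\eps/2$ (small enough that also $\eps'(1-\delta)>\delta$) makes the right-hand side smaller than $\eps\abs{S^\circ}$, so every shape of $\CT^\circ$ is $(K,\eps)$-invariant. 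Note that the \emph{magnitude} of the displacements in $H_0$ is irrelevant here: only the \emph{number} of absorbed cells enters the boundary estimate.

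Finally, the factor property: to decide the tile of a given $g$ it suffices to read $\CT$ on a finite neighbourhood — enough to tell whether $g\in R$, to evaluate $\phi(g)=h_ig$ through the window defining the $A_i$, and to recognise from the nearby centres the tile containing $\phi(g)$ — so a single finite $H$ does the job. The step I expect to be the main obstacle is making all of this \emph{uniform}: the comparison property, applied to one system, produces a displacement set and a window for that system, whereas the theorem demands one finite $H$ and one pair $\delta,N$ serving \emph{every} admissible quasitiling at once. Securing this uniformity — presumably by applying comparison inside a universal free system that factors onto all admissible quasitilings, or by a compactness argument over their space of codings — together with the passage to a free extension needed to invoke comparison, is the delicate part; once a uniform window is in hand, the verifications that $\CT^\circ$ tiles $G$, preserves centres, and is $(K,\eps)$-invariant are the routine estimates sketched above.
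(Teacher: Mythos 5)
The first thing to note is that the paper does not prove this statement at all: it is imported verbatim from \cite{DZ}, where it arises inside the proof of Theorem 7.5 of that paper. So there is no in-paper proof to compare against; the relevant benchmark is the Downarowicz--Zhang argument, and your mechanism is essentially the one used there: the remainder $R$ has upper Banach density at most $\delta$, a locally defined ``thin'' subset $B$ of the tiles has strictly larger lower Banach density, comparison converts this measure gap into a clopen subequivalence, and the resulting injective, finitely-displacing, locally defined transport $\phi\colon R\to B$ tells each uncovered cell which tile to join. Your invariance bookkeeping is also the right one: since $e\in K$, $\abs{K S^\circ\sd S^\circ}\leq\abs{KS\setminus S}+\abs{K}\abs{E}$ with $\abs{E}\leq\eps'\abs{S}$ by injectivity of $\phi$, and the finiteness of the new shape collection follows from $E\subseteq H_0^{-1}S_Bc$.

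The genuine gap is the clause ``passing to the subshift $X_\CT$ (after a free extension if needed).'' The factor claim of the theorem requires the subequivalence to be realized by sets $A_i$ that are clopen \emph{relative to $X_\CT$}, i.e.\ determined by finite windows of $\CT$ itself. But $X_\CT$ need not be a free action (quasitilings can have periods), and the comparison property is formulated for free actions; this is precisely why you reach for a free extension. Once you do, however, the partition $A=\bigsqcup_i A_i$ lives in the extension, so the displacement $h_i$ assigned to a cell $g\in R$ depends on extension coordinates that are invisible to $\CT$: no finite $H$ makes $\CT^\circ$ determined by $Hg\cap\CT$. The resulting object is a factor of the extension, not of $\CT$, which breaks both the ``moreover'' clause and the way the theorem is used later in the paper (the sliding-block code in Section \ref{sec:main} reads the tile of $\CT''$ containing $e$ off a finite window of $\CT'$). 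Closing this hole -- either by showing that comparison can be applied to the possibly non-free systems $X_\CT$, or by making the transport canonical and local in $\CT$ despite the extension -- is exactly the nontrivial content that \cite{DZ} supply, and it is missing from your sketch.

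By contrast, the uniformity issue you single out as ``the main obstacle'' is largely a misreading of the statement. Only $\delta$ and $N$ must be uniform over admissible quasitilings, and in your construction they already are: $\delta$ comes from the density-gap inequality and from $\delta<\eps/2$, and $N$ from requiring tiles large enough to contain a subset $S_B$ of relative size comparable to $\eps'$. The window $H$ is quantified \emph{after} $\CT$, so a per-quasitiling application of comparison is permitted. (Your proposed fix -- applying comparison once to the universal subshift of all admissible quasitilings, whose every point satisfies the same density bounds, hence whose every invariant measure satisfies the gap -- is indeed the natural route if one wanted a uniform $H$, but it does not remove the freeness/locality problem above, since that universal system is certainly not free.)
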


We will combine these two results into the following form (which will be used in the main construction):
\begin{thm}\label{thm:tiling}
If $G$ is a countable amenable group with the comparison property, then for every $\eps>0$, $N>0$ and every finite $K\subset G$ there exists a quasitiling $\CT'$ of $G$ such that
\begin{enumerate}
\item the shapes of $\CT'$ are all F\o lner sets, i.e. $\CS=\set{F_{n_1},F_{n_2},\ldots,F_{n_r}}$, where $N\leq n_1<n_2<\ldots <n_r$, and $r$ depends only on $\eps$.
\item $\CT'$ factors onto an exact tiling $\CT''$ of $G$ whose shapes are $(K,\eps)$-invariant.
\end{enumerate}
\end{thm}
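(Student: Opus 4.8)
The plan is to chain the \Folner-shaped quasitilings of Theorem~\ref{thm:quasitiling} with the comparison-based modification theorem of \cite{DZ} quoted above; the only real gap between them is that the former produces $\delta$-disjoint quasitilings whereas the latter demands genuinely disjoint tiles. Accordingly, the quasitiling $\CT'$ of condition~(1) will be exactly the one furnished by Theorem~\ref{thm:quasitiling}, and essentially all the effort goes into exhibiting a factor map from $\CT'$ onto a disjoint quasitiling to which \cite{DZ} can be applied.

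First I would fix the constants from the outside in. After replacing $K$ by $K\cup\set{e}$ if necessary (which only strengthens the invariance we must verify), apply the theorem of \cite{DZ} to the given $\eps$ and $K$ to obtain $\delta_0>0$ and $N_0>0$. Next choose an auxiliary $\delta>0$ and an index $M\geq N$ with $M$ large and $\delta$ small enough that every \Folner set $F_n$ with $n\geq M$ is $(K,\delta)$-invariant and satisfies $\abs{F_n}>N_0/(1-\delta)$, and that $\delta$ obeys the single inequality $\tfrac{2\delta}{1-\delta}\leq\delta_0$ governing the estimates below. Feeding $\delta$ and $M$ into Theorem~\ref{thm:quasitiling} yields a quasitiling $\CT'$ with shapes $F_{n_1},\dots,F_{n_r}$, $M\leq n_1<\dots<n_r$, that is $\delta$-disjoint and $(1-\delta)$-covering; since $n_1\geq M\geq N$ this already gives condition~(1), the number $r$ of shapes being inherited from Theorem~\ref{thm:quasitiling}.

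The heart of the argument is the disjointification. Let $\CT_0=\set{T^\circ:T\in\CT'}$ be the collection of pairwise disjoint cores supplied by $\delta$-disjointness; I claim $\CT_0$ is again a quasitiling and a factor of $\CT'$. Bounded overlap is the reason: any $g\in G$ lies in a tile $Sc$ only if $c\in S^{-1}g$, so $g$ meets at most $\sum_{S\in\CS}\abs{S}$ tiles, whence only finitely many local overlap patterns occur, each core $T^\circ$ is determined by the finitely many tiles meeting $T$, and $\CT_0$ has finitely many shapes and arises from $\CT'$ by a sliding block code. For invariance, using $e\in K$ one has $T^\circ\subseteq KT^\circ$, so $\abs{KT^\circ\sd T^\circ}=\abs{KT^\circ\setminus T^\circ}\leq\abs{KT\sd T}+\abs{T\setminus T^\circ}<2\delta\abs{T}$, and hence each core is $(K,\delta_0)$-invariant (its invariance ratio being at most $\tfrac{2\delta}{1-\delta}\le\delta_0$); its cardinality exceeds $(1-\delta)\abs{F_M}>N_0$; and a routine density comparison—using that disjointness of the cores caps the covering multiplicity of $\CT'$ and hence bounds the density of the discarded set $\bigcup_T(T\setminus T^\circ)$ by $\tfrac{\delta}{1-\delta}$—shows $\CT_0$ is still $(1-\delta_0)$-covering.

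Now $\CT_0$ satisfies every hypothesis of the theorem of \cite{DZ}: it is a $(1-\delta_0)$-covering quasitiling with pairwise disjoint tiles whose shapes are $(K,\delta_0)$-invariant and larger than $N_0$. That theorem therefore modifies $\CT_0$ into an exact tiling $\CT''$ with $(K,\eps)$-invariant shapes and presents $\CT''$ as a factor of $\CT_0$; composing the two factor maps $\CT'\to\CT_0\to\CT''$ yields condition~(2). The step I expect to be the main obstacle is precisely this disjointification: the four properties needed to enter \cite{DZ}—disjointness, surviving $(K,\delta_0)$-invariance of the shapes, surviving covering density, and finiteness of the shape set together with the sliding-block (factor) structure—must all be secured simultaneously from the merely $\delta$-disjoint output of Theorem~\ref{thm:quasitiling}, and it is this that forces both the hierarchy $\delta\ll\delta_0$ and the harmless enlargement of $K$ to contain $e$.
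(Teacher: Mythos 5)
Your overall architecture is exactly the one the paper intends --- the paper itself offers no written proof of Theorem~\ref{thm:tiling}, merely declaring that Theorem~\ref{thm:quasitiling} and the quoted result of \cite{DZ} are to be combined --- and you are right that the entire content of such a combination is the passage from $\delta$-disjointness to genuine disjointness. Your quantitative bookkeeping is also fine (invariance ratio $\tfrac{2\delta}{1-\delta}\leq\delta_0$ for the cores, cardinality $>N_0$, covering loss at most $\tfrac{\delta}{1-\delta}$). The gap is the pivotal claim that ``each core $T^\circ$ is determined by the finitely many tiles meeting $T$,'' from which you conclude that $\CT_0$ arises from $\CT'$ by a sliding block code. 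This is a non sequitur: $\delta$-disjointness is purely existential --- it asserts that \emph{some} system of pairwise disjoint $(1-\delta)$-cores exists, but it neither singles out such a system nor makes it a function of the tile configuration. A factor map must be shift-equivariant and defined on all of the orbit closure $X_{\CT'}$, so tiles with identical surrounding configurations (in the same point of $X_{\CT'}$ or in two different points) must receive identical cores, and cores chosen tile-by-tile from local data must moreover be \emph{globally} pairwise disjoint. Your bounded-overlap observation shows only that there are finitely many local configurations; it does not produce a consistent equivariant selection of cores, and that is precisely where the work of the theorem lies.

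The gap can be closed in two standard ways, one of which needs to be spelled out. (a) Strengthen Theorem~\ref{thm:quasitiling} using the internal structure of the Ornstein--Weiss/DHZ construction: tiles of equal shape can be taken pairwise disjoint, shapes are placed from largest to smallest, and each new tile is $\delta$-disjoint from the union of previously placed ones; then $T^\circ:=T\setminus\bigcup\set{T'\in\CT':\ T'\text{ of larger shape}}$ is canonical, locally determined, of size at least $(1-\delta)\abs{T}$, and the cores so defined are automatically pairwise disjoint, so your estimates run verbatim. (b) Keep Theorem~\ref{thm:quasitiling} as stated but replace per-tile cores by a pointwise equivariant splitting rule: fix an enumeration of the union of the shapes and assign each covered point $g$ to the tile $Sc\ni g$ for which the relative position $gc^{-1}\in S$ is minimal in that enumeration (this commutes with the shift, and there are no ties since $gc^{-1}=g(c')^{-1}$ forces $c=c'$); the reduced tiles are then disjoint, locally determined, and still cover $\bigcup\CT'$, but a single tile may now lose much more than $\delta\abs{T}$, so one must additionally discard every tile retaining less than $(1-\delta_0/2)$ of its mass and verify, via the density bound $\tfrac{\delta}{1-\delta}$ on multiply covered points, that discarding costs only $O(\delta/\delta_0)$ in covering density. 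Without (a) or (b), the composition $\CT'\to\CT_0\to\CT''$ is not established. A minor final point: with your choice of constants, $r$ depends on $\delta$ and hence on both $\eps$ and $K$ (through $\delta_0$ and $N_0$), not on $\eps$ alone as condition (1) asserts; this looseness is already present in the paper's statement and is harmless for its later application (where $K$ is fixed before $r$ is used), but it should be flagged rather than claimed outright.
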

\section{Subsystem entropy\label{sec:subsystem}}

\begin{thm}\label{thm:subsystem_entropy}
If $X$ is a strongly irreducible subshift of positive entropy $h$, then the set of topological entropies of subshifts of $X$ is dense in $(0,h)$.
\end{thm}
\begin{proof}
Let $a$ and $b$ be such that $0<a<b<h$. We will show that there exists a subshift $Y\subset X$ such that $a\leq h(Y) \leq b$ (which is an equivalent formulation of the theorem). Fix a positive $\eta$ smaller than $(b-a)/2$ and note that for every sufficiently large $L$ there  exists a positive integer $n$ such that $\frac{1}{L}\log n$ is in the interval $(a+\eta,b-\eta)$. Combined with theorem \ref{thm:entropy_and_block_count} this lets us state the following: 
  \begin{fct*}
  There exists an $N>0$, $\delta>0$ and $M>0$ such that if $T$ is an 
    $(F_n,\delta)$-invariant set for some $n>N$, and $\abs{T}\ge M$, then $\CB_T(X)$ has a subset $\CB_T^*$ such that $a+\eta<\frac{1}{\abs{T}}\log\abs{\CB_T^*}<b-\eta$.
  \end{fct*}  
Let $D$ denote the irreducibility distance of $X$. By theorem \ref{thm:tiling} there exists a tiling $\CT$ of $G$ such that $h(\CT)=0$ and the shapes of $\CT$ can have arbitrarily good invariance properties, which we will specify within the next few sentences. Enumerate the shapes of $\CT$ by $S_1,S_2,\ldots,S_J$. There exist some $\delta^*,\delta^{**}$and $n$ such that if the shapes of $\CT$ are $(D,\delta^*)$ and $(F_n,\delta^{**})$-invariant, then for every $j$ we can choose a family of blocks $\CB_j\subset \CB_{(S_j)_D}(X)$ such that if we denote $N_j=\abs{\CB_j}$, then we have $a+\eta<\frac{1}{\abs{(S_j)_D}}\log N_j<b-\eta$. In addition, since for small enough $\delta^*$ the relative difference between $\abs{(S_j)_D}$ and $\abs{S_j}$ can be arbitrarily small, we can even write
\[a+\eta<\frac{1}{\abs{S_j}}\log N_j<b-\eta.\]

Now, let $Y$ be the orbit closure of the set of all points $x\in X$ such that for every $T\in \CT$ $x(T_D)\in \CB_j$, where $j$ is such that $S_j$ is the shape of $T$. Observe that strong irreducibility, combined with the fact that $T_D$ is disjoint from all other tiles of $\CT$, means (via a standard compactness argument) that we can choose the blocks $x(T_D)$ independently of each other, and any such choice will yield a valid element of $Y$. 

We will estimate the entropy of $Y$ by considering the number of blocks with domain $F_n$ as $n$ increases to infinity, beginning with estimating this number of blocks from above. Fix  some large $n$ and note that every block $B$ with domain $F_n$ that occurs in $Y$ has the property that there exists some right-translate $\CT'$ of $\CT$ such that for every tile $T$ of $\CT'$ such that $T\subset F_n$ the block $B(T_D)$ belongs to $\CB_j$, where $j$ is such that $S_j$ is the shape of $T$. Let $H_n$ be the number of ways in which the right-translates of $\CT$ can intersect $F_n$, or equivalently, the number of ways in which the right-translates of $F_n$ can intersect $\CT$. Since $\CT$ has entropy $0$, for large enough $n$ we have $\frac{1}{\abs{F_n}}\log H_n <\frac{\eta}{2}$.

For any such right-translate $\CT'$ let $l_j$ be the number of tiles of $\CT'$ with shape $S_j$ that are subsets of $F_n$. Note that if $n$ is large enough, then $\sum_{j=1}^Jl_j\abs{S_j}$ is almost equal to $\abs{F_n}$. Since for every tile of $\CT'$ the block $B(T_D)$ is a block from $\CB_j$, and thus one of $N_j$ possible blocks, the $D$-interiors of the tiles of $\CT'$ can be filled in at most $\left(\prod_{j=1}^JN_j^{l_j}\right)$ ways. We have no control over the symbols outside these interiors, but we know that there are at most $\abs{F_n}-\sum_{j=1}^Jl_j\abs{(S_j)_D}$ such symbols. It follows that the number of blocks associated with $\CT'$ is at most

\[ \left(\prod_{j=1}^JN_j^{l_j}\right) \cdot \abs{\Lambda}^{\abs{F_n}-\sum_{j=1}^Jl_j\abs{(S_j)_D}}.\]
Taking logarithms and dividing by $\abs{F_n}$, we obtain
\[ \begin{split}\frac{1}{\abs{F_n}}\sum_{j=1}^J l_j\log N_j + \frac{\abs{F_n}-\sum_{j=1}^Jl_j\abs{(S_j)_D}}{\abs{F_n}}\log\abs{\Lambda}\end{split}.\]
If $\delta^*$ was small enough so that the $D$-interiors of the tiles of $\CT'$ form a $(1-\frac{\eta}{2\abs{\Lambda}})$-covering quasitiling (which we can safely assume since $D$, $\eta$ and $\Lambda$ were known before we started the construction), then for large enough $n$ the second term will not exceed $\frac{\eta}{2}$. As for the first term, we can further estimate it as follows:
\[ \begin{split}
\frac{1}{\abs{F_n}}\sum_{j=1}^J l_j\log N_j = \frac{1}{\abs{F_n}}\sum_{j=1}^J l_j\abs{S_j}\frac{1}{\abs{S_j}}\log N_j <(\beta-\eta) \frac{1}{\abs{F_n}}\sum_{j=1}^J l_j\abs{S_j}<\beta-\eta.
\end{split}\]
It follows that for any right-translate $\CT'$ of $\CT$, the number of blocks in $Y$ with domain $F_n$ that have blocks from $\CB_j$ in every tile of $\CT'$ with shape $S_j$ does not exceed $2^{(b-\frac{\eta}{2})\abs{F_n}}$, and thus the cardinality of $\CB_{F_n}(Y)$ does not exceed $H_n2^{(b-\frac{\eta}{2})\abs{F_n}}$. This lets us estimate that
\[  \frac{1}{\abs{F_n}}\log \abs{\CB_{F_n}(Y)} \leq \frac{1}{\abs{F_n}}\log H_n+b-\frac{\eta}{2}<b,\]
and hence
\[ h(Y)<b.\]
The lower bound for entropy is much simpler: for any $n$ the number of blocks with domain $F_n$ which occur in $Y$ is equal to at least $\left(\prod_{j=1}^JN_j^{l_j}\right)$, where $l_j$ is the number of tiles of $\CT$ with shape $S_j$ which are subsets of $F_n$ (this time we do not even need to consider possible translates of $\CT$). Consequently,
\[ 
\frac{1}{\abs{F_n}}\log\abs{\CB_{F_n}(Y)}\geq \frac{1}{\abs{F_n}}\sum_{j=1}^J l_j\log N_j = \frac{1}{\abs{F_n}}\sum_{j=1}^J l_j\abs{S_j}\frac{1}{\abs{S_j}}\log N_j > (a+\eta)\frac{\sum_{j=1}^Jl_j\abs{S_j}}{\abs{F_n}},
\]
which for large enough $n$ will be greater than $a$, and therefore $h(Y)>a$, which concludes the proof.
\end{proof}

\section{Marker blocks}\label{sec:markers}
Our main objective in this section will be to prove the following theorem:
\begin{thm}\label{thm:marker}
Let $X$ be an aperiodic, strongly irreducible symbolic dynamical system (with the irreducibility distance $D$) over the alphabet $\Lambda$, with action of a countable amenable group $G$. Let $Y$ be a proper subshift of $X$. Then there exists a block $M$ with shape $K$, and a tiling $\CT$ (with a family of shapes $\CS$), satisfying the following conditions:
\begin{enumerate}
\item For every $T\in\CT$, any $t\in T$, and any $x\in X$, if $x(Kt)=M$, then the set $T_D\cap Kt$ is not empty, and the block $x(T_D\cap Kt)$ does not occur in $Y$.
\item For two different $g_1,g_2\in G$ and any $x\in X$, if $x(Kg_1)=x(Kg_2)=M$, then $Kg_2\setminus DKg_1\neq\emptyset$, and $x(Kg_2\setminus DKg_1)$ is a block which does not appear in $Y$. 
\end{enumerate}
\end{thm}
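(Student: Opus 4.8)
The plan is to construct the marker block $M$ so that it encodes a forced occurrence of some block that cannot appear in $Y$, and to place the markers on a sufficiently invariant tiling so that the geometric overlap conditions can be controlled. Since $Y$ is a proper subshift of $X$, there is some block $W$ (with domain $V$) which occurs in $X$ but not in $Y$; fix such a $W$. The key idea is that the defining feature of $M$ should be that wherever $M$ appears, a copy of $W$ is forced to appear inside the relevant interior region, so that the block read there cannot occur in $Y$. The aperiodicity hypothesis on $X$ is what lets me rule out the pathological situation where two marker occurrences overlap so heavily that one is a translate of the other --- this is what will give the nonemptiness assertions $T_D\cap Kt\neq\emptyset$ and $Kg_2\setminus DKg_1\neq\emptyset$.

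I would proceed as follows. First, I would choose the tiling $\CT$ via Theorem~\ref{thm:tiling}, taking the shapes to be highly invariant F\o lner sets (invariant enough with respect to $D$, $V$, and the set $P$ of candidate periods that will arise), so that the $D$-interior $T_D$ of each tile is a large $(1-\eps)$-subset and there is ample room inside $T_D$ to place a copy of $W$ away from the boundary. Second, I would design $K$ (the domain of $M$) and the values of $M$ by combining two ingredients on disjoint parts of the domain: a \emph{signature part}, built from a $P$-aperiodic block guaranteed by the aperiodicity of $X$, whose role is to make $M$ impossible to self-overlap nontrivially (i.e. to prevent $Kg_1$ and $Kg_2$ from coinciding in a way that makes $Kg_2\setminus DKg_1$ empty), and a \emph{content part}, which forces the copy of $W$ that witnesses the non-occurrence in $Y$. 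Strong irreducibility is used here to guarantee that such a composite block $M$ actually occurs in $X$: the disjointness condition $DT_1g_1\cap T_2g_2=\emptyset$ in the definition lets me freely prescribe the signature and the forced $W$ simultaneously.

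For condition~(1), I would argue that if $x(Kt)=M$ for some $t\in T$, then by the construction the forced copy of $W$ sits inside $T_D$ (this is where the invariance of the shapes matters: the marker is placed at the tile center so that its content region lands well within the $D$-interior), hence $T_D\cap Kt$ contains the domain of that copy of $W$ and the block read there does not occur in $Y$. For condition~(2), given two distinct marker positions $g_1,g_2$, I would use the aperiodicity-derived signature to show that $Kg_2$ cannot be contained in $DKg_1$: if it were, the two overlapping signatures would force a periodicity of the kind excluded by the $P$-aperiodic block, a contradiction. Once $Kg_2\setminus DKg_1$ is known to be nonempty, the same content-forcing mechanism shows the block read on this difference set is a witness to non-occurrence in $Y$, after possibly enlarging $K$ so that the "leftover'' region $Kg_2\setminus DKg_1$ always carries a full forced copy of $W$ regardless of how the two markers are positioned.

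The main obstacle I expect is condition~(2), and specifically the interaction between the two markers' overlap geometry and the requirement that the leftover region $Kg_2\setminus DKg_1$ always contain enough information to witness non-occurrence in $Y$. The difficulty is that $g_1,g_2$ are arbitrary, so the overlap $DKg_1\cap Kg_2$ can be almost all of $Kg_2$, leaving only a thin sliver; I must ensure that even such a sliver is forced to read a block outside $Y$, which likely requires redundantly encoding the witness $W$ throughout $K$ (so that no matter which sliver survives, it still contains a forced copy), while simultaneously keeping the aperiodic signature strong enough to bound how large the overlap can be. Balancing these two competing demands --- enough redundancy to survive any sliver, yet enough rigidity to keep markers from overlapping too much --- is the delicate quantitative heart of the argument, and it is where the precise choice of $P$ and the invariance parameters of the tiling must be matched carefully.
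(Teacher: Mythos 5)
Your sketch assembles the right ingredients (a witness block absent from $Y$, a $P$-aperiodic ``signature,'' strong irreducibility to glue them, and some form of redundancy), but it leaves the actual heart of the theorem unresolved, and you say so yourself. The difficulty you flag for condition~(2) is real, and your proposed remedies cannot be implemented as stated because of a circularity your sketch never breaks. To exclude by aperiodicity every displacement $g$ for which $Kg\cap DKg_1$-type overlaps occur, the signature would have to be $Q$-aperiodic for $Q=K^{-1}DK\setminus\{e\}$; but $Q$ depends on $K$, while $K$ must already contain the signature block, whose domain is dictated by $Q$ (aperiodicity of $X$ gives no control over the domain of the aperiodic block it supplies). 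Likewise, ``enlarging $K$ so that every sliver carries a full forced copy of $W$'' enlarges $K^{-1}DK$ and hence the set of problematic displacements, so the two demands chase each other. The paper breaks this circle with a construction your proposal lacks: writing $Z$ for the domain of the witness $B$ (your $W$), it sets $P_g=g^{-1}Z^{-1}DZg$ and takes $P$ to be a maximal set contained in $P_g$ for infinitely many $g$; then $G_P=\{g:P\subset P_g\}$ is infinite and, crucially, for every finite $E\subset G$ one has $E\cap P_g\subset P$ for all but finitely many $g\in G_P$. The conjugates $P_c$ arise because a copy of $B$ planted at position $c$ has self-overlap displacement set exactly $P_c$, which cannot be avoided by moving $c$; thus $P$ is the unavoidable core, and \emph{only} $P$ is handled by aperiodicity (so the $P$-aperiodic block can be chosen before $K$ is known). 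All remaining problematic displacements are killed not by aperiodicity but by genericity: two extra copies of $B$ are planted at $c_2,c_3\in G_P$ chosen to avoid a finite list of algebraic coincidences (each coincidence excludes only finitely many elements of $G_P$), and an exhaustive case analysis shows that for every displacement $g\notin P$ at least one of the three components $K_1g$, $Zc_2g$, $Zc_3g$ is entirely disjoint from $DK$, hence survives intact in the sliver $Kg\setminus DK$. Without this three-fold redundancy anchored on the conjugation-stable set $P$, your argument for condition~(2) does not close.

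There is a second gap, in your treatment of condition~(1). You justify it by saying ``the marker is placed at the tile center so that its content region lands well within the $D$-interior,'' but the theorem quantifies over \emph{all} $t\in T$: in an arbitrary $x\in X$ the block $M$ may occur anywhere in a tile, and exactly this is what is needed later to rule out spurious marker occurrences. A single forced copy of $W$ inside $K$ cannot suffice, since for $t$ near the boundary of $T$ that copy may land outside $T_D$ or outside $T$ altogether. The paper handles this with a separate redundancy, calibrated against the tiling rather than against marker-marker overlaps: it fixes an auxiliary finer tiling $\CT'$ and plants copies of $B$ at positions $Zc'$, $c'\in C'$, spread through $K_1$ densely enough (relative to the invariance of the shapes of $\CT$) that for every $g\in T$ either the aperiodic core $K_0g$ or one of the planted copies $Zc'g$ lies inside $T_D$. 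So redundant placement of the witness is needed for condition~(1) as well, and with a different geometric calibration than the redundancy needed for condition~(2); your proposal conflates the two.
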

\begin{proof} For clarity, we will separate the proof into stages
	\begin{enumerate}
\item Since $Y$ is a proper subshift of $X$, there exists some block $B$ which occurs in $X$ but not in $Y$. Let $Z$ denote the shape of $B$. 
\item Let $P_e=Z^{-1}DZ$ and for any $g\in G$ lest $P_g=g^{-1}P_eg$. Observe that for any $g$, $P_g$ is a finite set (and all of these sets have equal cardinality), and $e\in P_g$ (because we assumed that $e\in D$). Let $P'_g$ be the largest (in terms of cardinality) subset of $P_g$ such that $P'_g\subset P_h$ for infinitely many $h$ (if there is more than one such subset of equal cardinality, we can choose any of them). Choose $g_0$ so that $P'_{g_0}$ has maximal cardinality among all the $P_g$'s. Set $P=P'_{g_0}$ and $G_P=\set{g\in G: P\subset P_g}$. Observe that if any $h\in G$ occurs in infinitely many $P_g$ for $g\in G_P$, then due to the maximality of $P$ we necessarily have $h\in P$. It follows that for any $E\subset G$ we have $E\cap P_g\subset P$ for all but finitely many $g\in G_P$.

\item By our assumption, there exists in $X$ a block $M_0$ with domain $K_0$ which is  $P$-aperiodic, i.e. for every $g\in P$ there exists a $k\in K_0$  such that $k$ and $kg$ are both elements of $K_0$, and $M_0(kg)\neq M_0(k)$. By strong irreducibility, we can also require that $M_0$ has $B$ as as subblock, and thus $M_0$ cannot occur in $Y$. 
\item Let $\CT'$ be a tiling of $G$, such that the shapes of $\CT'$ are supersets of $DZ$, let $S'$ denote the union of all shapes of $\CT'$, and let $\CT$ be another tiling, whose shapes are $(S',\delta)$-invariant, where $\delta$ is so small that for every shape $S$ of $\CT$, and every $s\in G$ the set $S_{D}s^{-1}$ includes an entire tile of $\CT'$. Let $S$ be the union of all shapes of $\CT$. By the invariance property established earlier, for any $s\in S\setminus S_{DK_0}$ the set $S_{D}s^{-1}$ includes an entire tile of $\CT'$, which allow us to choose a finite set $C'$ (consisting of centers of such tiles) such that for every $s\in S\setminus S_{DK_0}$ the set $S_{D}s^{-1}$ is a superset of $Zc'$ for some $c'\in C'$. 

Let $K_1=K_0\cup \bigcup_{c'\in C'}DZc'$. Strong irreducibility yields the existence of a block $M_1$ with domain $K_1$, such that $M_1(K_0)=M_0$, and for every $c'\in C'$ we have $M_1(Zc')=B$. Observe that for any tile $T\in \CT$, if $g\in T$, then either $K_0g\subset T_D$, or for at least one $c'\in C'$ we have $Zc'g\subset T_D$. Indeed, $T$ has the form $Sc$, where $S$ is one of the shapes of $\CT$. Let $s=gc^{-1}$. If $s\in S_{DK_0}$, then $DK_0s\subset S$, and thus $DK_0g=DK_0sc\subset Sc = T$. Otherwise, we know that for some $c'\in C'$ we have $Zc'\subset S_{D}s^{-1}$.  Therefore $Zc's\subset S_{D}$, which in turn yields $Zc'g=Zc'sc \subset S_{D}c=T_D$.


That way we have obtained a block $M_1$ with domain $K_1$ that satisfies the first property stated in our theorem. Note that any block that has $M_1$ as a subblock will retain this property.
\item Now choose $c_2$ as an element of $G_P$ which satisfies the following conditions (each of them is satisfied for all but finitely many elements of the group, and $G_P$ is infinite, which makes such a choice possible). The significance of these conditions is certainly not obvious at first, but will become clear later.
\begin{enumerate}
\item $c_2\notin Z^{-1}D^{-1}K_1$.\label{marker: irreducibility 1}
\item $c_2^{-1}\notin K_1^{-1}DK_1K_1^{-1}D^{-1}Z$\label{marker:1,2 overlap 1}.
\item $c_2^{-1}\notin K_1^{-1}DZZ^{-1}D^{-1}Z$\label{marker:1,2 overlap 2}.
\end{enumerate}
Also choose $c_3$ as an element of $G_P$ satisfying the following conditions (which is again possible because each of the following is true for all but finitely many elements of $G_P$)

\begin{enumerate}[resume]
\item $c_3\notin Z^{-1}D^{-1}(K_1\cup Zc_2)$.\label{marker: irreducibility 2}
\item $c_3\notin Z^{-1}D^{-1}K_1c_2^{-1}Z^{-1}DK_1$\label{marker:1 overlap 3, 2 overlap 1}.
\item $c_3\notin Z^{-1}D^{-1}K_1c_2^{-1}Z^{-1}DZc_2$\label{marker:1 overlap 3, 2 overlap 2}.
\item $c_3\notin  Z^{-1}D^{-1}Zc_2K_1^{-1}DK_1$\label{marker:2 overlap 3, 1 overlap 1}.
\item $c_3\notin  Z^{-1}D^{-1}Zc_2K_1^{-1}DZc_2$\label{marker:2 overlap 3, 1 overlap 2}.
    \item $P_{c_3}\cap P_{c_2}=P$\label{marker:1 overlap 1, 2 overlap 2, 3 overlap 3}.
\item   $c_3^{-1}\notin K_1^{-1}DK_1K_1^{-1}D^{-1}Z$\label{marker:1 overlap 1, 2 overlap 2, 3 overlap 1}.
\item $c_3^{-1}\notin c_2^{-1}Z^{-1}DZZ^{-1}D^{-1}Z$\label{marker:1 overlap 1, 2 overlap 2, 3 overlap 2}.
	\item $P_{c_3}\cap K_1^{-1}DZc_2=P$\label{marker:1 overlap 2, 2 overlap 1, 3 overlap 3}.
\item $c_3^{-1}\notin K_1^{-1}DZc_2K_1^{-1}D^{-1}Z$\label{marker:1 overlap 2, 2 overlap 1, 3 overlap 1}.
\item $c_3^{-1}\notin K_1^{-1}DZZ^{-1}D^{-1}Z$\label{marker:1 overlap 2, 2 overlap 1, 3 overlap 2}.
\end{enumerate}

Finally, we replace $K'$ with $K=K_1\cup Zc_2\cup Zc_3$, and we choose a block $M$ such that $M(K_1)=M_1$, $M(Zc_2)=M(Zc_3)=B$. Such an $M$ exists, because conditions (\ref{marker: irreducibility 1}) and (\ref{marker: irreducibility 2}) imply that $DZc_2$ is disjoint from $K_1$, and $DZc_3$ is disjoint from $K_1\cup Zc_2$, which allows us to invoke the strong irreducibility of $X$. Now, assume that for some $x\in X$ and $g\in G$ we have $x(Kg)=x(K)=M$. We will show that in such a situation, at least one of the sets $K_1g$, $Zc_2g$, $Zc_3g$ is disjoint with $DK$. This will require some laborious and repetitive computations, which we will separate into the following steps.
\begin{enumerate}
\item $x(Kg)=X(K)=M$ implies that for every $k\in K_1$ such that $kg$ is also in $K_1$, we have $M_1(k)=M_1(kg)$. Since $M_1$ is $P$-aperiodic, $g$ cannot belong to $P$. 
\item Observe that for any $g\in G$ each of the sets $DK_1$, $DZc_2$, $DZc_3$ is intersected by at most one of the sets $K_1g$ and $Zc_2g$. Indeed:
\begin{itemize}
\item  Suppose that $DK_1\cap K_1g$ and $DK_1\cap Zc_2g$ are both nonempty. This implies $g\in K_1^{-1}DK_1$ and $g\in c_2^{-1}Z^{-1}DK_1$, and thus
$K_1^{-1}DK_1\cap c_2^{-1}Z^{-1}DK_1\neq \emptyset$.
This, however would imply that $c_2^{-1}\in K_1^{-1}DK_1K_1^{-1}D^{-1}Z$, contradicting the condition (\ref{marker:1,2 overlap 1}). 
\item Similarly, if $DZc_2\cap K_1g$ and $DZc_2\cap Zc_2g$ are both nonempty, a similar reasoning tells us that the intersection  $K_1^{-1}DZc_2\cap c_2^{-1}Z^{-1}DZc_2$ is nonempty, thus so is $K_1^{-1}DZ\cap c_2^{-1}Z^{-1}DZ$, and therefore $c_2^{-1}\in K_1^{-1}DZZ^{-1}D^{-1}Z$, contradicting the condition (\ref{marker:1,2 overlap 2}). 
\item In the case where $DZc_3\cap K_1g$ and $DZc_3\cap Zc_2g$ are both nonempty, we similarly conclude that the set $K_1^{-1}DZc_3\cap  c_2^{-1}Z^{-1}DZc_3$ is nonempty, thus so is $K_1^{-1}DZ\cap  c_2^{-1}Z^{-1}DZ$, which is exactly the same conclusion as above (which means it also cannot hold).
\end{itemize}
\item As established above, even if $K_1g$ and $Zc_2g$ both intersect $DK$, they intersect different ``components'' of $DK$. We will now show that if either of $K_1g$ or $Zc_2g$ intersects $DZc_3$, then the other one is disjoint from $DK$. Indeed, suppose $K_1g\cap DZc_3$ is nonempty, which is equivalent to the condition $g\in K_1^{-1}DZc_3$.
\begin{itemize}
\item We have already established that in this scenario $Zc_2g$ cannot intersect $DZc_3$.
\item If $Zc_2g\cap DK_1$ is nonempty, then $g\in c_2^{-1}Z^{-1}DK_1$. Therefore, $c_2^{-1}Z^{-1}DK_1\cap K_1^{-1}DZc_3$ is nonempty, which gives us $c_3\in Z^{-1}D^{-1}K_1c_2^{-1}Z^{-1}DK_1$, contradicting the condition (\ref{marker:1 overlap 3, 2 overlap 1}).
\item If $Zc_2g\cap DZc_2$, then $g\in c_2^{-1}Z^{-1}DZc_2$. This means $c_2^{-1}Z^{-1}DZc_2\cap  K_1^{-1}DZc_3$ is nonempty, and thus $c_3\in Z^{-1}D^{-1}K_1c_2^{-1}Z^{-1}DZc_2$, contradicting (\ref{marker:1 overlap 3, 2 overlap 2})
\end{itemize}
On the other hand, if $Zc_2g\cap DZc_3$ is nonempty, then $g\in c_2^{-1}Z^{-1}DZc_3$, and we reason as follows:
\begin{itemize}
\item We already know $K_1g$ cannot intersect $DZc_3$.
\item If $K_1g\cap DK_1\neq\emptyset$, then $g\in K_1^{-1}DK_1$. Thus $c_2^{-1}Z^{-1}DZc_3\cap K_1^{-1}DK_1$ is nonempty, hence $c_3\in  Z^{-1}D^{-1}Zc_2K_1^{-1}DK_1$, contradicting (\ref{marker:2 overlap 3, 1 overlap 1})
\item If $K_1g\cap DZc_2$, then $g\in K_1^{-1}DZc_2$. Consequently, $ K_1^{-1}DZc_2\cap c_2^{-1}Z^{-1}DZc_3$ is nonempty, yielding $c_3\in  Z^{-1}D^{-1}Zc_2K_1^{-1}DZc_2$, and this is impossible by virtue of condition (\ref{marker:2 overlap 3, 1 overlap 2}).
\end{itemize}
\item There remain only two possible cases where $K_1g$ and $Zc_2g$ both overlap $DK$, and we will show that in both of those situations $Zc_3g$ must be disjoint from $DK$. The first possibility is that $K_1g\cap DK_1$ and $Zc_2g\cap DZc_2$ are both nonempty. This gives us $g\in K_1^{-1}DK_1$ and $g\in c_2^{-1}Z^{-1}DZc_2(=P_{c_2})$, and we need to again consider three possibilities
\begin{itemize}
\item Suppose $Zc_3g\cap DZc_3\neq\emptyset$, and thus $g\in c_3^{-1}Z^{-1}DZc_3=P_{c_3}$ this means that $g\in P_{c_2}\cap P_{c_3}$, but by virtue of (\ref{marker:1 overlap 1, 2 overlap 2, 3 overlap 3}) this means $g\in P$. However, we have already established in step $(a)$ that $g$ cannot be in $P$, so this situation is not possible.
\item If $Zc_3g\cap DK_1$ is nonempty, then $g\in c_3^{-1}Z^{-1}DK_1$. Combining that with the fact that $g\in K_1^{-1}DK_1$, we establish that $K_1^{-1}DK_1\cap c_3^{-1}Z^{-1}DK_1$ is nonempty, and thus $c_3^{-1}\in K_1^{-1}DK_1K_1^{-1}D^{-1}Z$, contradicting (\ref{marker:1 overlap 1, 2 overlap 2, 3 overlap 1}).
\item If $Zc_3g\cap DZc_2$ is nonempty, then $g\in c_3^{-1}Z^{-1}DZc_2$. Combining this with the fact that $g\in c_2^{-1}Z^{-1}DZc_2$, we see that $c_3^{-1}Z^{-1}DZc_2\cap c_2^{-1}Z^{-1}DZc_2\neq\emptyset$, and thus $c_3^{-1}Z^{-1}DZ\cap c_2^{-1}Z^{-1}DZ$ is also nonempty. This gives us $c_3^{-1}\in c_2^{-1}Z^{-1}DZZ^{-1}D^{-1}Z$, contradicting (\ref{marker:1 overlap 1, 2 overlap 2, 3 overlap 2}).
\end{itemize}
\item The final possibility to exclude is that $K_1g\cap DZc_2$ and $Zc_2g\cap DK_1$ are both nonempty. These conditions translate into $g\in K_1^{-1}DZc_2$ and $g\in c_2^{-1}Z^{-1}DK_1$. For the last time, we need to consider three possible cases for $Zc_3g$.
\begin{itemize}
\item If $Zc_3g\cap DZc_3\neq\emptyset$, then $g\in c_3^{-1}Z^{-1}DZc_3=P_{c_3}$. We also know that $g\in K_1^{-1}DZc_2$, but in light of condition (\ref{marker:1 overlap 2, 2 overlap 1, 3 overlap 3}) this means $g\in P$, which is not possible.
\item If $Zc_3g\cap DK_1\neq\emptyset$, we get $g\in c_3^{-1}Z^{-1}DK_1$. Combining this with $g\in K_1^{-1}DZc_2$, we have $c_3^{-1}Z^{-1}DK_1\cap K_1^{-1}DZc_2$, thus $c_3^{-1}\in K_1^{-1}DZc_2K_1^{-1}D^{-1}Z$, contradicting  (\ref{marker:1 overlap 2, 2 overlap 1, 3 overlap 1}).
\item If $Zc_3g\cap DZc_2\neq\emptyset$, it follows that $g\in c_3^{-1}Z^{-1}DZc_2$. We also know that  $g\in K_1^{-1}DZc_2$, so  $c_3^{-1}Z^{-1}DZc_2\cap K_1^{-1}DZc_2 \neq\emptyset$, and consequently $c_3^{-1}Z^{-1}DZ\cap K_1^{-1}DZ$ is also nonempty. This implies $c_3^{-1}\in K_1^{-1}DZZ^{-1}D^{-1}Z$, contradicting (\ref{marker:1 overlap 2, 2 overlap 1, 3 overlap 2}).
\end{itemize}
\end{enumerate}
\end{enumerate}
We have shown that if $x(Kg)=x(K)=M$, then $Kg\setminus DK$ is a superset of at least one out of $K_1g$, $Zc_2g$, and $Zc_3g$. Thus the block $x(Kg\setminus DK)$ has either $M_1$ or $B$ as a subblock, but neither of those blocks occur in $Y$, and therefore $x(Kg\setminus DK)$ also does not occur in $Y$. More generally, if $x(Kg_1)=x(Kg_2)=M$, then since $x(Kg_2)=x(kg_2g_1^{-1}g_1)=g_1x(Kg_2g_1^{-1})$, we can write $(g_1x)(K)=(g_1x)(Kg_2g_1^{-1})=M$. It follows that $g_1x(Kg_2g_1^{-1}\setminus DK)$ is a block which does not occur in $Y$, but $g_1x(Kg_2g_1^{-1}\setminus DK)=x(Kg_2\setminus DKg_1)$, so this block also does not occur in $Y$, as required.
\end{proof}
Note that the tiling $\CT$ constructed above can (and probably will) have shapes smaller than the domain of $M$. This will not be an obstacle in our construction, but nevertheless we note that one can replace $\CT$ it by a larger, congruent tiling (i.e one whose tiles are unions of tiles of $\CT$), the shapes of which can be arbitrarily large and have arbitrarily good invariance properties, and such a replacement we will retain the properties specified in the theorem. Thus we can make the following remark:
\begin{rem}
The tiling $\CT$ in Theorem \ref{thm:marker} can be chosen to be $(F,\eps)$ invariant for any $\eps>0$ and any finite $F\subset G$.
\end{rem}
\section{Constructing the extension}\label{sec:main}
\begin{thm}
If $G$ is a countable amenable group with the comparison property, and $X$ is an aperiodic, strongly irreducible symbolic dynamical system with the shift action of $G$, then for every $N\in \mathbb{N}$ such that $\log(N)<h(X)$ there exists a factor map from $X$ onto the full $G$-shift over $N$ symbols.
\end{thm}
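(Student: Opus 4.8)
The plan is to realize the full $N$-shift as the image of a sliding-block code on $X$ whose output is organized along a tiling that can be recovered from the marker block of Theorem \ref{thm:marker}. First I would use Theorem \ref{thm:subsystem_entropy} to fix a proper subshift $Y\subsetneq X$ with $\log N< h(Y)<h(X)$: the upper bound guarantees that $Y$ is a \emph{proper} subshift (so that markers exist at all), while the lower bound provides the ``room'' needed for encoding. Applying Theorem \ref{thm:marker} to the pair $Y\subset X$ then yields a marker block $M$ with domain $K$ together with a tiling $\CT$ with shapes $\CS=\set{S_1,\dots,S_J}$; by the remark following that theorem I would take $\CT$ to be $(F,\eps)$-invariant for a large finite $F$ and small $\eps$ chosen below, so that each $D$-interior $(S_j)_D$ is a $(1-\eps)$-subset of $S_j$.

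Next I would set up the encoding on a single tile. Because $h(Y)>\log N$, Theorem \ref{thm:entropy_and_block_count} applied to $Y$ gives $\CN_{(S_j)_D}(Y)\geq 2^{(h(Y)-\eps)\abs{(S_j)_D}}>N^{\abs{S_j}}$ once $\eps$ is small and the shapes are large enough (here $(h(Y)-\eps)(1-\eps)>\log N$ suffices); hence for each $j$ I can fix a surjection $\pi_j$ from a family of $Y$-blocks with domain $(S_j)_D$ onto the set $\Lambda_N^{S_j}$ of all $N$-symbol patterns on the tile. The target shift is thus described tile-by-tile: to hit a prescribed $w\in\Lambda_N^G$ one places the marker $M$ at every centre of $\CT$ and fills the remaining part of each interior $(S_j)_D$ with a $Y$-block that $\pi_j$ sends to the pattern of $w$ on that tile. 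Strong irreducibility together with the pairwise disjointness of the $D$-interiors (a compactness argument as in the proof of Theorem \ref{thm:subsystem_entropy}) guarantees that such a point $x\in X$ exists; the orbit closure $W$ of all these marked points is the subsystem on which the code will be surjective.

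The factor map $\phi\colon X\to\Lambda_N^G$ itself I would define by decoding. Given $x$, locate every occurrence of $M$; using property (1) of Theorem \ref{thm:marker} each such occurrence determines the interior of a unique tile, so the occurrences, with their shape data, form a quasitiling, which I would complete to an exact tiling by the comparison-property mechanism (Theorem \ref{thm:tiling} and the completion result preceding it), performed as a finite-window factor and hence equivariantly and continuously. On each resulting tile one outputs the $\Lambda_N$-pattern obtained by applying the appropriate $\pi_j$ to the $Y$-block read off its interior, with a fixed default symbol where no marker information is available. By construction $\phi$ is then a sliding-block code, so it is continuous and shift-equivariant, and on $W$ it inverts the encoding of the previous paragraph, giving $\phi(W)=\Lambda_N^G$ and therefore $\phi(X)=\Lambda_N^G$.

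The main obstacle is making this decoding \emph{globally consistent and genuinely local} on all of $X$: in an arbitrary point the block $M$ may occur in unintended places and in overlapping configurations, so one must ensure that the reconstructed tiling, and hence the output at each site, depends only on a bounded window and does not shift when spurious or colliding markers appear. This is precisely the purpose of the two overlap conditions of Theorem \ref{thm:marker}: property (2) forces distinct marker occurrences to differ on a region carrying a block absent from $Y$, which both separates the markers and prevents a filler (a $Y$-block) from accidentally completing to $M$, while property (1) ties every marker to the interior of a single tile. Verifying that these conditions let the detected markers be completed to a well-defined exact tiling with a uniformly bounded coding window — and that on $W$ this reconstruction returns exactly the planted tiling — is the technical heart of the argument; the entropy bookkeeping and surjectivity are comparatively routine once the decoding is known to be a legitimate factor map.
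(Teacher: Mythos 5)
Your skeleton matches the paper's: a subsystem $Y$ with $\log N<h(Y)<h(X)$ from Theorem \ref{thm:subsystem_entropy}, the marker block and tiling from Theorem \ref{thm:marker}, a sliding-block decoder with a default symbol, and surjectivity via points in which markers and $Y$-blocks are planted along a tiling. But the decoding step, which you correctly identify as the heart of the matter, contains a genuine gap rather than a deferred verification. You assert that property (1) of Theorem \ref{thm:marker} lets each occurrence of $M$ ``determine the interior of a unique tile,'' so that the occurrences ``with their shape data'' form a quasitiling. Property (1) does no such thing: it is a constraint tying occurrences of $M$ in an arbitrary $x\in X$ to the \emph{fixed} tiling $\CT$ (any occurrence poisons the interior of whichever tile it happens to sit in), and it is used in the paper only to rule out spurious markers in the planted preimages. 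It gives no equivariant finite-window rule by which a decoder could recover a tile, and marker positions by themselves carry no shape information at all --- a set of detected centers does not tell you which shape sits at which center. The paper's construction supplies precisely this missing mechanism: next to each planted marker at $Kc$ it reserves a set $K^*c$ (with $K^*$ disjoint from $DK$) and encodes there, as a $Y$-block via a surjection $\psi:\CB_{K^*}(Y)\to\set{1,\ldots,r_\delta}\times E$, the shape index and the offset of the tile's center, so the decoder can read off an actual tile $S_jgc$ from each detected marker. Without some such device your detected occurrences are a bare point set, not a quasitiling.

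The second gap is the completion step. Theorem \ref{thm:tiling} (via the comparison property) converts one fixed quasitiling, satisfying specific hypotheses (pairwise disjoint tiles, $(1-\delta)$-covering, invariant shapes), into an exact tiling that is a factor \emph{of that quasitiling}; it is not a universal local rule that can be applied to whatever marker configuration an arbitrary $x\in X$ happens to exhibit. The paper makes the completion legitimate by fixing the Ornstein--Weiss quasitiling $\CT'$ and its factor tiling $\CT''$ \emph{in advance}, planting the data of $\CT'$ (markers plus $\psi$-codes) in the preimage, and having the decoder first check that the tiles it reads off coincide with the intersection of its window with some right-translate of $\CT'$, outputting $0$ otherwise; only after this consistency check does it apply the pre-fixed factor map $\CT'\to\CT''$. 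That check is what makes the code well defined on all of $X$ and simultaneously guarantees that decoding inverts encoding --- exactly the point you leave unresolved. Note also that your encoder and decoder do not match: you plant a marker at \emph{every} center of the exact tiling $\CT$ (so there is nothing to complete, and moreover the paper's argument excluding spurious markers relies on most tiles of $\CT$ having pure $Y$-block interiors, which fails if every tile carries a marker), while your decoder completes a detected quasitiling to some possibly different exact tiling. The paper instead uses two tilings with different roles --- the small $\CT$ purely for marker-exclusion, the large $\CT'$/$\CT''$ for encoding, with markers planted sparsely, one per large tile --- and without tying encoder and decoder to the same fixed pair $(\CT',\CT'')$ there is no reason the decoder's tiling coincides with the planted one, hence no proof of surjectivity.
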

\begin{proof}
By theorem \ref{thm:subsystem_entropy}, $X$ has a subsystem $Y$ such that $\log N < h(Y) < h(X)$. Before we delve into the technical details, here is a rough outline of the construction:
\begin{enumerate}
    \item The factor map will determine if an element of $X$ can be tiled (at least within some finite area around the neutral element of $G$) using a certain collection of large shapes.
    \item This decision will be made based on the occurrences of a marker block within a certain window.
    \item If such a local tiling can be found, there is a correspondence between blocks over its tiles and blocks in the full shift, which induces the image under the factor map (in other words, we define the map as a sliding block code). If the contents of $x$ does not induce a local tiling (which is entirely possible, and in fact more likely than not), the code just assigns $0$ to the image a the ``problematic'' coordinates.
    \item To show that this map is a surjection, we tile every element of the full shift using large shapes, and construct an element in $X$ that only has marker blocks at the centres of such shapes; and blocks from $Y$ elsewhere. Since $h(Y)>\log N$, the space not taken up by markers is enough to encode the entire element of the full shift.
\end{enumerate}

We can apply theorem \ref{thm:marker} to obtain a block $M$ with domain $K$, and tiling $\CT$ with the following properties:
\begin{enumerate}
    \item If $T$ is a tile of $\CT$ and for some $x\in X$ we have $x(Kt)=M$, then the (nonempty) block $x(T_D\cap Kt)$ does not occur in $Y$
    \item For any two different $g_1,g_2\in G$, if $x(Kg_1)=x(Kg_2)=M$, then the (nonempty) block $x(Kg_2\setminus DKg_1)$ does not occur in $Y$.
\end{enumerate}
Let $\eps=h(Y)-\log N$ and let $E$ denote the union of the shapes of $\CT$. There exists a $\delta$ such that if $S$ is any $(E,\delta)$-invariant set, then the union of tiles of $\CT$ which are contained in $S$ is a $(1-\frac{\eps}{2})$-subset of $S$.

We are about to apply theorem \ref{thm:tiling} (note: we are not yet applying this theorem; we are just discussing one of its parameters) with the parameter $\delta$, so we know it will yield a quasitiling with $r_\delta$ shapes $S_1,S_2,\ldots,S_{r_\delta}$, where $r_\delta$ will depends only on $\delta$. Let $K^*\subset G$ be a finite set such that $\abs{\CB_{K^*}(Y)}>r_\delta\cdot \abs{E}$, and thus there exists a surjective function $\psi: \CB_{K^*}(Y) \to \set{1,\ldots, r_\delta}\times E$. We can also assume that $K^*$ is disjoint from $DK$.


We can now apply theorem \ref{thm:tiling} to obtain a quasitiling $\CT'$ such that:
\begin{enumerate}
    \item $\CT'$ has $r_\delta$ shapes.
    \item $\CT'$ factors onto a tiling $\CT''$ with the same set of centres, and such that all shapes of $\CT''$ are $(E,\delta)$-invariant.
    \item Every shape $S$ of $\CT''$ is a superset of $D(K\cup K^*)$, and in fact the number of blocks with domain $S_D\setminus D(K\cup K^*)$ that occurs in $Y$ is greater than $N^{\abs{S}}$, hence there exists a surjective map $\Pi: \CB_{S_D\setminus D(K\cup K^*)}(Y)\to \set{0,1,\ldots,N-1}^S$.
\end{enumerate}
(The latter property easily follows from the fact that if the shapes of $\CT'$ are sufficiently large F\o lner sets, then $S_D\setminus D(K\cup K^*)$ can have arbitrarily large relative cardinality in $S$, and the entropy of $Y$ exceeds $\log N$).

We now have all the objects we need to define the factor map $\pi$ from $X$ onto the full $G$-shift over $N$ symbols. We will do so by describing a procedure to determine the symbol $\pi(x)(e)$ based on $x(H')$ for a certain finite $H'\subset G$.

Let $H'$ be large enough that the knowledge of how the set of centers of $\CT'$ intersects $H'$ allows us to determine the tile $T''$ of $\CT''$ such that $e\in T''$. Consider the set of all $c\in H'$ such that $x(Kc)=M$. For every such $c$, if we have $\psi(K^*c)=(j,g)$ (for $1\leq j\leq r_{\delta}$ and $g\in E)$, we obtain a set of the form $S_jgc$. There are now two possibilities:
\begin{itemize}
\item If the set of these shapes is equal to the intersection of $H'$ with some right-translate of $\CT'$, then it determines the intersection of $H$ with the same right-translate of $\CT''$, and in particular it uniquely assigns $e$ to a set of the form $Sh$ for some shape $S$ of $\CT''$ and some $h\in H$. If $x(S_Dh\setminus D(K\cup K^*)h)$ is some block $A$ which occurs in $Y$, then let $x(e)=\Pi(A)(h^{-1})$.
\item If uniquely determining $x(e)$ as above is not possible (or the resulting block $A$ does not occur in $Y$), set $\pi(x)(e)=0$.
\end{itemize}
The map defined above is a sliding block code (with window $H'$), and thus it is a factor map from $X$ onto some subset of the full $G$-shift over $N$ symbols, and the only non-trivial property left to verify is surjectivity. In other words, we need to show that for every $z\in \set{0,1,\ldots,N-1}^G$ there exists some $x\in X$ such that $\pi(x)=z$. For such an $z$, we will define its preimage $x$ by prescribing the content of $x$ within $D$-interiors of disjoint sets (mostly tiles of $\CT$); strong irreducibility means that such an $x$ will exist provided the individual blocks do occur in $X$. Enumerate the tiles of $\CT'$ as $T'_1,T'_2,\ldots$. For every $k$, the tile $T'_k$ has the form $S'_{j(k)}c'_k$, where $j(k)\in\set{1,2,\ldots,r_\delta}$. The center $c'_k$ belongs to some tile $S_kc_k$ of $\CT$, and thus $c'_k=g_kc_k$ for some $g_k\in E$. The set $K^*c_k$ is a subset of the $D$-interior of some union of finitely many tiles of $\CT$; denote this interior by $\widehat{K^*}c_k$. Let $x(Kc_k)=M$, let $x(((S_k)_D\setminus K)c_k)$ be any block from $Y$, and let $x(\widehat{K^*}c_k)$ be a block from $Y$ such that $\psi(x(K^*c_k))=(j(k),g_k)$. Let $\hat{S}_kc_k$ be the union of all tiles of $\CT$ that are disjoint from $D(Kc_k\cup \widehat{K^*}c_k)$ and contained within $(T'_k)_D$. There exists a block $A$ in $Y$, with domain $(S_k)_D\setminus D(K\cup K^*)$, such that $\Pi(A)=z(T'')$, where $T''$ is the tile of $\CT''$ whose center is $c_k$ (we know there is exactly one such tile), and we can extend $A$ to a block $A_k$ (also occurring with $Y$) with domain $\hat{S}_k$. Set $x(\hat{S}_kc_k)=A_k$. In addition, for any tile $T$ of $\CT$ which is not a subset of $(T'_k)_D$ for any $k$, we can set $x(T_D)$ to be any block in $Y$.

The above construction, together with the properties of $M$, means that $x(Kc)=M$ if and only if $c=c_k$ for some $k$ (this is because all $D$-interiors of tiles of $\CT$, except the places where we explicitly put the marker, were chosen to be blocks from $Y$). This means that for every $g$ we can uniquely determine the tile of $\CT''$ to which $g$ belongs, based on the contents of $x(H'g)$, and hence $\pi(x)=z$.
\end{proof}

%
%
%

\end{document}